\documentclass[11pt]{amsart}

\usepackage[margin=1.0in]{geometry}
\usepackage{amsmath,amssymb,amsthm}
\usepackage[hidelinks]{hyperref}

\numberwithin{equation}{section}

\newtheorem{theorem}{Theorem}[section]
\newtheorem{proposition}[theorem]{Proposition}

\DeclareMathOperator{\supp}{supp}
\DeclareMathOperator{\dist}{dist}
\DeclareMathOperator{\spanop}{span}
\DeclareMathOperator{\ext}{ext}

\newcommand{\R}{\mathbb R}
\newcommand{\p}{\mathbb P}
\newcommand{\E}{\mathbb E}
\newcommand{\eps}{\varepsilon}
\newcommand{\abs}[1]{\left|#1\right|}
\newcommand{\norm}[1]{\left\lVert#1\right\rVert}
\newcommand{\absconv}{\operatorname{absconv}}

\title{A $5/8$ Lower Bound on the Banach-Mazur Distance to the Cross-Polytope}
\author{Omer Friedland}
\address{Institut de Math\'ematiques de Jussieu, Sorbonne Universit\'e, 4 place Jussieu, 75005 Paris, France}
\email{omer.friedland@imj-prg.fr}
\date{}

\begin{document}

\begin{abstract}
Let $\Gamma$ be an $n\times m$ matrix with independent standard Gaussian entries and let $G_m = \Gamma(B_1^m)$ be the associated Gaussian Gluskin polytope. In the regime $m = n^3$ we prove that, with probability at least $1-C/n$,
$$
d_{\mathrm{BM}}(G_m,B_1^n) \ge c n^{5/8}(\log n)^{-1/4}.
$$
This improves the polynomial exponent $4/7$ obtained in the author's preceding work and gives an explicit logarithmic factor. The proof retains the discretization and conditioning/powering framework, but replaces the earlier split into two coefficient regimes by two uniform quotient events. One controls successive directions of the big-coordinate parts; the other compresses the entire small-coordinate cloud near a low-dimensional subspace after every admissible quotient. Suppression, a local Maurey argument, and Gram-Schmidt volume estimates then combine these two forms of control.
\end{abstract}

\keywords{Banach-Mazur distance, cross-polytope, Gaussian polytopes, quotient methods, Maurey empirical method, Gram-Schmidt widths}
\subjclass[2020]{Primary 46B07; Secondary 52A23, 52B11, 60D05}

\maketitle

\section{Introduction}

\paragraph{The problem.}
For origin-symmetric convex bodies $K,L\subset\R^n$, their Banach-Mazur distance is
$$
d_{\mathrm{BM}}(K,L) = \inf\{\rho\ge1:\exists T\in GL(n), T(K)\subset L\subset\rho T(K)\}.
$$
Throughout, all logarithms are natural. If $H$ is a Euclidean space, $\gamma_H$ denotes the standard Gaussian measure on $H$ and $B_2^H$ its Euclidean unit ball. For a Euclidean subspace $E$, $P_E$ denotes the orthogonal projection onto $E$. When $H=\R^n$ we write $\gamma_n$ for $\gamma_H$.
A classical fixed-center problem in asymptotic geometric analysis asks for the order of
$$
R_1(n) := \sup\{d_{\mathrm{BM}}(K,B_1^n):K\subset\R^n\text{ is an origin-symmetric convex body}\},
$$
where $B_1^n = \operatorname{conv}\{\pm e_1,\ldots,\pm e_n\}$ is the cross-polytope. Gluskin's theorem shows that the diameter of the full Banach-Mazur compactum is of order $n$ \cite{Gluskin1981}, while the fixed-center problem is more rigid. The best general upper bound is of order $n^{5/6}$, due to Giannopoulos \cite{Giannopoulos1995}. On the lower side, Szarek obtained $c\sqrt n\log n$ by random-matrix methods \cite{Szarek1990}; Tikhomirov later reached the exponent $5/9$, up to logarithmic factors, for the Gaussian Gluskin model \cite{Tikhomirov2019}. The author's preceding paper improved the exponent to $4/7$ \cite{Friedland2026}.

Fix $m = n^3$, let $\Gamma\in\R^{n\times m}$ have independent standard Gaussian entries, and write
$$
G_m = \Gamma(B_1^m) = \absconv\{g_1,\ldots,g_m\},
$$
where $g_1,\ldots,g_m$ are the columns of $\Gamma$.

\begin{theorem}[The $5/8$ bound]\label{thm:main}
There exist universal constants $c,C>0$ such that, for all sufficiently large $n$,
$$
\p\left\{d_{\mathrm{BM}}(G_m,B_1^n)\ge c n^{5/8}(\log n)^{-1/4}\right\} \ge 1-\frac{C}{n}.
$$
Consequently,
$$
R_1(n)\ge c n^{5/8}(\log n)^{-1/4}.
$$
\end{theorem}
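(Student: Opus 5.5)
We reduce the distance bound to a statement about a single normalized operator. Suppose $d_{\mathrm{BM}}(G_m,B_1^n)\le \rho$ with $\rho = c n^{5/8}(\log n)^{-1/4}$. Then there is $T\in GL(n)$ with $T(B_1^n)\subset G_m\subset \rho\,T(B_1^n)$. Equivalently, write $x_i = T e_i$, so the $x_i$ lie in $G_m$ and every column $g_j$ lies in $\rho\,\absconv\{x_1,\dots,x_n\}$.

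Each $x_i\in G_m$ is a convex combination of $\pm g_j$, so $x_i=\Gamma a_i$ with $\|a_i\|_1\le 1$. Hence $T=\Gamma A$ with $A=[a_1,\dots,a_n]$ having $\ell_1$-bounded columns. The condition $G_m\subset\rho T(B_1^n)$ then says that $\|T^{-1}\Gamma\|_{1\to1}\le\rho$, i.e.\ $\|(\Gamma A)^{-1}g_j\|_1\le\rho$ for all $j$. By a net/discretization argument (the framework of \cite{Friedland2026}), we replace $A$ by a member of a finite family of size $\exp(\mathrm{poly}(n)\log n)$. The task is then to show that, for each fixed discretized $A$, this event has probability smaller than the inverse of the net size.

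For fixed $A$, we split each $a_i$ into its big coordinates (magnitude $\ge \tau$, at most $1/\tau$ of them) and its small coordinates. Accordingly $x_i=\Gamma a_i^{\mathrm{big}}+\Gamma a_i^{\mathrm{small}}$. Conditioned on the columns $g_j$ used by big parts, the small part is an approximately Gaussian vector with covariance $\|a_i^{\mathrm{small}}\|_2^2 I$. Its $\ell_2$ norm is small, at most $\sqrt{\tau}$, because $\|a_i\|_1\le1$.

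We then run two uniform quotient estimates. First, successive big-part directions stay spread: after quotienting by any admissible subspace of dimension $k$, the next big-part vector retains a fixed fraction of its norm. Second, the small-coordinate cloud $\{\Gamma a_i^{\mathrm{small}}\}$ lies, after every admissible quotient, within a thin neighborhood of a subspace of dimension $d\approx n^{3/4}$; we would derive this from a local Maurey empirical argument, using that small-coordinate vectors in $\ell_1$ are approximable by averages of few signed columns. We then exploit the fresh Gaussian columns $g_j$ not touched by $A$ (almost all of the $m=n^3$): each must satisfy $\|T^{-1}g_j\|_1\le\rho$. The Gram--Schmidt volume of $T$ bounds this via $\gamma_n(\rho\,T(B_1^n))\le \rho^n\,\mathrm{vol}(T B_1^n)\cdot(2\pi)^{-n/2}$, and the volume is estimated through Gram--Schmidt widths. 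The big-part spread controls the widths in the first $\sim n/2$ steps, and the compression of the small cloud forces volume loss of order $(\sqrt{\tau}\,\cdot)$ in the remaining directions. Suppression handles indices where both parts are comparable. Raising the single-column probability $p$ to the power $m-n^2$ (conditioning on columns used) gives $p^{n^3/2}$, which beats the net.

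Optimizing the threshold $\tau$ and the dimension $d$ so that the volume bound yields $p\le e^{-c n^{-2}\cdot\mathrm{poly}\log}$ gives the exponent $5/8$; the $\log$ factor comes from the Maurey step. The main obstacle is the second quotient event, namely compressing the small-coordinate cloud uniformly over all admissible quotients with entropy small enough to union-bound against the net. I would first try a chaining argument over Grassmannian nets combined with the Maurey sparsification at level $k\approx \tau^{-1}\log n$. Finally, $R_1(n)\ge d_{\mathrm{BM}}(G_m,B_1^n)$ for any realization, so the consequence follows.
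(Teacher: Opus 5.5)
There is a genuine gap in the volume step, which is the core of your plan. You propose to bound $\gamma_n(\rho\,T(B_1^n))$ by the full $n$-dimensional estimate $\rho^n\mathrm{vol}(TB_1^n)(2\pi)^{-n/2}$ and to control $\mathrm{vol}(TB_1^n)$ through Gram--Schmidt widths of the vectors $x_i=\Gamma a_i$. This cannot go beyond $\rho\approx\sqrt n$. Take the $a_i$ to be (discretizations of) $n$ distinct coordinate vectors. Then there is no small part, $T$ is essentially an $n\times n$ Gaussian matrix with $\abs{\det T}\approx\sqrt{n!}$, and your bound is of order $(C\rho/\sqrt n)^n$, which is huge for $\rho=n^{5/8}$. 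No uniform event about widths can change that determinant. Your first event also points the wrong way: ``each big part retains a fixed fraction of its norm'' is a \emph{lower} bound on widths, which enlarges the volume. The paper's $E_{\mathrm{big}}$ is an upper bound: in every ordered block of $h\in[\lceil r/4\rceil,r]$ big parts, at least $\lceil h/2\rceil$ successive quotient distances are at most $C_b\sqrt k$.

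The missing idea is the suppression--quotient reduction that moves the volume estimate into a low-dimensional space. This, and not the treatment of indices where both parts are comparable, is what suppression does. First, averaging over $r$-subsets $J$ via $\gamma_n(tP)\le\frac{2}{\binom nr}\sum_{\abs J=r}\gamma_n(4tP_J)$ multiplies the $r$ selected big parts by $r/n$. Next, projecting onto $(\Gamma B)^\perp$ with $B=\spanop\{b_i:i\notin J\}$ annihilates the $n-r$ unselected big parts and leaves dimension $k\ge r$. Finally, one projects onto a $\lceil 3k/4\rceil$-dimensional $H_0\perp F$, where $F$ is the exceptional subspace supplied by $E_{\mathrm{small}}$. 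Only in $H_0$ does the cross-polytope estimate $(C\rho R/d)^{d/4}$, with $R\approx C_b\frac rn\sqrt k+C_s\sqrt{k/s}\sqrt{\log(en/d)}$, become effective. This is exactly where the condition $\rho(\sqrt r/n+\sqrt{\log n/(rs)})\lesssim1$ comes from. Balanced against the entropy condition $ns\log n\lesssim r^2$, it yields $5/8$. Your plan has no parameter playing the role of $r$, so the asserted optimization of $\tau$ and $d$ has no mechanism behind it.

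Several further steps would also fail as written.
\begin{enumerate}
\item \emph{Conditioning.} Conditioning on the columns used by the big parts does not make the small parts Gaussian, because an index can be big in one column of $A$ and small in another. The paper avoids this by making $E_{\mathrm{small}}$ uniform over all of $U_s$ and conditioning on the restriction $\Gamma|_B$ to a subspace, not on coordinate columns.
\item \emph{The small-cloud event.} You flag this as the main obstacle, but it needs neither chaining nor Maurey. If it fails for $B$, greedily pick $\lfloor k/16\rfloor+1$ extreme points of $U_s$ whose images have successive distances exceeding $C_s\sqrt{k/s}$. Each distance has conditional law $\norm{w_j}_2\chi_{k-j+1}$ with $\norm{w_j}_2\le s^{-1/2}$, so each step costs $e^{-ck}$ against entropy $e^{Cs\log n}$. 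The total is $e^{Csk\log n-ck^2}$. Maurey enters only in the deterministic covering of the small hull inside $H_0$.
\item \emph{Bookkeeping.} The uniform events fail with probability about $e^{-cr^2}=e^{-cn^{3/2}\log n}$. This is far larger than $1/\abs{A_\eps}\approx e^{-Cn^2\log n}$, so you cannot ask for a per-$A$ bound below the inverse net size. Instead, the paper proves $\gamma_n(K_A)\le1/2$ for all $A$ simultaneously on the global event $E_{\mathrm{big}}\cap E_{\mathrm{small}}$. It then uses that $\{\gamma_n(K_A)\le1/2\}$ is measurable with respect to the columns used by $A$, which gives $\p(H_A\cap E_{\mathrm{big}}\cap E_{\mathrm{small}})\le2^{-(n^3-n^2)}$. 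Your target $p\le e^{-cn^{-2}\mathrm{polylog}}$, raised to the power $n^3/2$, gives only $e^{-cn\,\mathrm{polylog}}$, which does not beat a net of size $e^{Cn^2\log n}$.
\end{enumerate}
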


\paragraph{How the new argument emerged.}
The $4/7$ proof in \cite{Friedland2026} uses the same discretization, conditioning/powering reduction, and threshold decomposition that will appear below, but it then separates the analysis according to the number of small-coordinate generators in a representation. The present argument grew from asking whether that representation-by-representation split could be avoided. The useful reversal is to quotient first by the span of all unselected big-coordinate parts and then ask for uniform information about the two coefficient clouds in the quotient.

This leads to two global events. The event $E_{\mathrm{small}}$ says that, after every admissible big-coordinate quotient, the \emph{entire} small-coordinate cloud lies within distance $C\sqrt{k/s}$ of a subspace of dimension at most $k/16$. Thus the exceptional object is one low-dimensional subspace, not a large collection of exceptional vectors. The event $E_{\mathrm{big}}$ supplies the complementary information: in every sufficiently long ordered block of big-coordinate parts, at least half of the successive quotient directions have size $O(\sqrt k)$. Neither event alone closes the proof. Their roles meet only after suppression and quotienting: $E_{\mathrm{small}}$ makes all remaining small generators short, while $E_{\mathrm{big}}$ guarantees a positive proportion of controlled Gram-Schmidt widths among the surviving big generators. The quantitative point is that selecting a fixed proportion of the $k$ small-coordinate directions and paying an $e^{-ck}$ tail at each step still produces an $e^{-ck^2}$ estimate, while removing a factor $\sqrt{\log n}$ from the small-coordinate radius.

\paragraph{Proof architecture.}
Let
$$
E_\rho = \{d_{\mathrm{BM}}(G_m,B_1^n)\le\rho\}.
$$
Discretization reduces $E_\rho$ to finitely many witness events $H_A = \{G_m\subset K_A\}$. Conditioning on the Gaussian columns used by $A$ and applying the powering argument make a witness negligible as soon as $\gamma_n(K_A)\le1/2$. We therefore construct $E_{\mathrm{big}}$ and $E_{\mathrm{small}}$ so that
$$
\p(E_\rho) \le \p(E_{\mathrm{big}}^c)+\p(E_{\mathrm{small}}^c)+\abs{A_\eps}2^{-(n^3-n^2)}.
$$
The proof has three steps:
\begin{enumerate}
\item show that $\p(E_{\mathrm{big}}^c)\le e^{-cr^2}$;
\item show that $\p(E_{\mathrm{small}}^c)\le e^{-cr^2}$;
\item prove deterministically that $E_{\mathrm{big}}\cap E_{\mathrm{small}}$ implies $\gamma_n(K_A)\le1/2$ simultaneously for all discretized witnesses $A$.
\end{enumerate}
In the last step, suppression leaves $r$ scaled big-coordinate vectors and quotienting removes all the others. A local Maurey argument reduces the entropy of the small-coordinate hull. A sparse representation principle then reduces the resulting body to cross-polytopes with $d$ generators, and the two good events provide enough small terminal Gram-Schmidt widths to obtain exponentially small Gaussian measure.

The two probabilistic events require
$$
ns\log n\lesssim r^2,
$$
while the deterministic closure requires
$$
\rho\left(\frac{\sqrt r}{n}+\frac{\sqrt{\log n}}{\sqrt{rs}}\right)\lesssim1.
$$
The first term is the cost of the suppressed big-coordinate vectors and the second is the cost of compressing and covering the small-coordinate cloud. Balancing the two costs and saturating the entropy condition gives
$$
s\asymp n^{1/2},\quad r\asymp n^{3/4}(\log n)^{1/2},\quad \rho\asymp n^{5/8}(\log n)^{-1/4}.
$$
This parameter calculation explains both the polynomial exponent and the logarithmic factor before the technical estimates begin.

\paragraph{Chronology and independent concurrent work.}
The two-event proof architecture and the bound $n^{5/8}(\log n)^{-3/8}$ were obtained after the submission of \cite{Friedland2026} and before the author received the manuscript of Hmadi described below. The author did not circulate that result while \cite{Friedland2026} was under review. On August 17, 2026, Antonios Hmadi communicated an independent preprint \cite{Hmadi2026} proving the same polynomial exponent with the bound $n^{5/8}(\log n)^{-5/8}$. Thus the present argument already gave a stronger logarithmic factor before the author learned of Hmadi's work. His proof uses mixed exterior-product estimates, L\"owner normalization, Dvoretzky-Rogers selection, and Maurey's method after the common discretization, conditioning, and $K/U$ framework. The present proof uses the two uniform quotient events above. A later optimization of the small-coordinate estimate further improves the logarithmic factor from $(\log n)^{-3/8}$ to $(\log n)^{-1/4}$. The approaches were developed independently.

\paragraph{Organization.}
Section~\ref{sec:reduction} gives the probability reduction and defines the two uniform quotient events. Sections~\ref{sec:big} and \ref{sec:small} prove that these events hold with high probability. Section~\ref{sec:closure} combines them after suppression and quotienting. Section~\ref{sec:parameters} selects the parameters and completes the proof.

\section{Reduction and the two uniform quotient events}\label{sec:reduction}

Put $L = \log n$. For the moment let $1\le\rho\le n$ and let $s,r$ be integers satisfying
$$
1\le s\le n, \quad 32L\le r\le\frac n2.
$$
Define
$$
E_\rho = \{d_{\mathrm{BM}}(G_m,B_1^n)\le\rho\}, \quad \eps = (\rho n^2)^{-1}.
$$
We use the discretization and conditioning/powering framework of Tikhomirov and the author's preceding work. In the form needed here, we invoke Lemmas 2.2 and 2.4 of \cite{Friedland2026}; the conditioning argument underlying the powering step goes back to Tikhomirov \cite[Lemma~3.1]{Tikhomirov2019}. The choices $m = n^3$ and $\eps = (\rho n^2)^{-1}$ satisfy their hypotheses. There is a finite class $A_\eps$ of matrices $A\in\R^{m\times n}$ such that every column of $A$ has $\ell_1$-norm at most one, support at most $n$, and entries in $\eps\mathbb Z$, with
\begin{align} \label{eq:net-size}
\log\abs{A_\eps}\le Cn^2L,
\end{align}
and
\begin{align} \label{eq:outer-discretization}
\p(E_\rho) \le \p\left(\bigcup_{A\in A_\eps}H_A\right),
\end{align}
where
$$
H_A = \{G_m\subset K_A\}, \quad K_A = 2\rho\Gamma A(B_1^n).
$$
For $A\in A_\eps$, let
$$
I(A) = \bigcup_{i = 1}^n\supp(Ae_i), \quad F_A = \sigma(g_j:j\in I(A)).
$$
The matrix $A$ uses only the Gaussian columns indexed by $I(A)$, and $\abs{I(A)}\le n^2$. Thus conditioning on $F_A$ fixes $K_A$, while at least
$$
N(A) = m-\abs{I(A)}\ge n^3-n^2
$$
independent Gaussian columns remain. Hence
\begin{align} \label{eq:powering}
\p(H_A\mid F_A) \le \gamma_n(K_A)^{N(A)}.
\end{align}
Therefore it is enough to construct one global event $G$ on which
\begin{align} \label{eq:uniform-half}
\gamma_n(K_A)\le\frac12 \quad(A\in A_\eps).
\end{align}
Indeed, if \eqref{eq:uniform-half} holds on $G$, then the event $\{\gamma_n(K_A)\le1/2\}$ is $F_A$-measurable and
$$
\p(H_A\cap G)\le\p\{H_A,\gamma_n(K_A)\le1/2\}\le2^{-N(A)}.
$$
Consequently,
\begin{align} \label{eq:first-reduction}
\p(E_\rho) \le \p(G^c)+\abs{A_\eps}2^{-(n^3-n^2)}.
\end{align}

We now construct $G$. For $a = (a_j)_{j = 1}^m\in\R^m$, put
\begin{align} \label{eq:big-small-partition}
a_j^{\mathrm{big}} = a_j\mathbf1_{\{\abs{a_j}\ge s^{-1}\}}, \quad a_j^{\mathrm{small}} = a_j\mathbf1_{\{\abs{a_j}<s^{-1}\}}.
\end{align}
If $\norm{a}_1\le1$, then
\begin{align} \label{eq:big-support}
\abs{\supp a^{\mathrm{big}}}\le s,
\end{align}
and
\begin{align} \label{eq:small-bounds}
\norm{a^{\mathrm{small}}}_1\le1, \quad \norm{a^{\mathrm{small}}}_\infty\le s^{-1}, \quad \norm{a^{\mathrm{small}}}_2\le s^{-1/2}.
\end{align}
Put
$$
B_s = \{b\in(\eps\mathbb Z)^m:\norm{b}_1\le1, \abs{\supp b}\le s\}
$$
and
$$
U_s = \{u\in\R^m:\norm{u}_1\le1, \norm{u}_\infty\le s^{-1}\}.
$$
For a matrix $A$, apply the partition columnwise and write $A = A^{\mathrm{big}}+A^{\mathrm{small}}$. Then
\begin{align} \label{eq:common-enlargement}
A(B_1^n)\subset A^{\mathrm{big}}(B_1^n)+A^{\mathrm{small}}(B_1^n).
\end{align}
The big-coordinate parts lie in $B_s$, and the small-coordinate parts lie in $U_s$. The partition and \eqref{eq:common-enlargement} are exactly the reduction used in Tikhomirov and Friedland. Up to this point the proof is unchanged. The partition produces two geometric objects, and the suppression-quotient argument determines the precise uniform property needed from each one.

Call a subspace $B\subset\R^m$ admissible if it is spanned by vectors from $B_s$ and satisfies $\dim B\le n-r$. There are finitely many admissible subspaces. With probability one $\Gamma$ is injective on every subspace spanned by at most $n$ vectors from $B_s$, and we work throughout on this full-probability event. For an admissible $B$, write
$$
k = n-\dim B\ge r, \quad Q_B = P_{(\Gamma B)^\perp}\Gamma:\R^m\longrightarrow(\Gamma B)^\perp.
$$
Fix universal constants $C_b,C_s>0$, to be chosen in Propositions~\ref{prop:big} and \ref{prop:small}.

In words, the first good event says that every large block of big-coordinate parts has many controlled successive directions after every preceding big-coordinate span. Precisely, $E_{\mathrm{big}}$ is the event that for every admissible $B$, every integer
$$
\left\lceil\frac r4\right\rceil\le h\le r,
$$
and every ordered list $c_1,\ldots,c_h\in B_s$, at least $\lceil h/2\rceil$ of the distances
\begin{align} \label{eq:big-distances}
\dist\left(Q_Bc_j,\spanop\{Q_Bc_i:i<j\}\right)
\end{align}
are at most $C_b\sqrt k$.

The second good event says that after every big-coordinate quotient, the whole small-coordinate cloud lies near one small-dimensional subspace. Precisely, $E_{\mathrm{small}}$ is the event that for every admissible $B$ there is a subspace $F\subset(\Gamma B)^\perp$ satisfying
\begin{align} \label{eq:small-good-property}
\dim F\le\left\lfloor\frac{k}{16}\right\rfloor, \quad \sup_{u\in U_s}\dist(Q_Bu,F) \le C_s\sqrt{\frac{k}{s}}.
\end{align}
These events are measurable: the admissible family is finite, and for fixed $B$ the infimum over subspaces $F$ is attained on the compact Grassmannian. Put
$$
G = E_{\mathrm{big}}\cap E_{\mathrm{small}}.
$$
Combining this identity with \eqref{eq:first-reduction} gives the central probability reduction
\begin{align} \label{eq:three-event-reduction}
\p(E_\rho) \le \p(E_{\mathrm{big}}^c)+\p(E_{\mathrm{small}}^c)+\abs{A_\eps}2^{-(n^3-n^2)},
\end{align}
provided \eqref{eq:uniform-half} holds on $G$. This equation is the map for the rest of the proof.

\section{The big-coordinate event}\label{sec:big}

The first term in \eqref{eq:three-event-reduction} is $\p(E_{\mathrm{big}}^c)$. We now estimate it. We shall use the elementary Gaussian tail
\begin{align} \label{eq:gaussian-tail}
\p\{\norm{g}_2>C\sqrt q\}\le e^{-cq} \quad(g\sim N(0,I_d), d\le q),
\end{align}
where $C$ can be chosen arbitrarily large at the cost of changing $c$. Indeed, $\E\exp(\norm{g}_2^2/4) = 2^{d/2}$, and Markov's inequality gives \eqref{eq:gaussian-tail}.

The number $M = \abs{B_s}$ satisfies
\begin{align} \label{eq:B-count}
M\le\left(\frac{Cm}{s\eps}\right)^s, \quad \log M\le CsL.
\end{align}
The first estimate follows by choosing the support and then the lattice values; the second uses $m = n^3$, $\eps^{-1} = \rho n^2$, and $1\le\rho\le n$.

\begin{proposition}\label{prop:big}
For a sufficiently large universal choice of $C_b$, there are universal constants $c_1,c>0$ such that, if $nsL\le c_1r^2$, then
$$
\p(E_{\mathrm{big}}^c)\le e^{-cr^2}.
$$
\end{proposition}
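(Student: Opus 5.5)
Fix an admissible subspace $B$, an integer $h$ with $\lceil r/4\rceil\le h\le r$, and an ordered list $c_1,\ldots,c_h\in B_s$. We first bound the probability that this particular configuration is bad, meaning that more than $\lfloor h/2\rfloor$ of the distances \eqref{eq:big-distances} exceed $C_b\sqrt k$. We then take a union bound over all configurations.

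\emph{Counting.} An admissible $B$ is determined by a spanning family of at most $n$ vectors from $B_s$. Hence there are at most $M^{n}$ admissible subspaces, and $\log(\#B)\le CnsL$ by \eqref{eq:B-count}. There are at most $r$ choices of $h$, and at most $M^h\le e^{CrsL}$ ordered lists. For a bad configuration we also choose the set $J$ of at least $h/2$ bad indices, which costs at most $2^h$. Since $r\le n$, the total number of choices is at most $\exp(CnsL)$. Under $nsL\le c_1 r^2$ this is at most $\exp(Cc_1r^2)$, which will be absorbed by a per-configuration bound of $e^{-c'r^2}$.

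\emph{Single configuration.} We may replace each $c_j$ by its component orthogonal to $B$ and to $\spanop\{c_i:i<j\}$ in $\R^m$, taking $c_j$ outside $B+\spanop\{c_i:i<j\}$, since otherwise the distance is $0$. The key point is that $\Gamma$ maps orthogonal vectors to independent Gaussians: if $w_1,\ldots,w_h$ are the Gram--Schmidt orthonormalization of $c_1,\ldots,c_h$ modulo $B$, then $\Gamma w_1,\ldots,\Gamma w_h$ are i.i.d.\ $N(0,I_n)$ and independent of $\Gamma|_B$. We write $c_j=b_j+\sum_{i\le j}\alpha_{ij}w_i$ with $b_j\in B$. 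Then
$$
\dist\bigl(Q_Bc_j,\spanop\{Q_Bc_i:i<j\}\bigr)\le|\alpha_{jj}|\,\dist\bigl(P_{(\Gamma B)^\perp}\Gamma w_j,\ \spanop\{P_{(\Gamma B)^\perp}\Gamma w_i:i<j\}\bigr),
$$
because $Q_Bc_j-\alpha_{jj}Q_Bw_j$ lies in $\spanop\{Q_Bw_i:i<j\}=\spanop\{Q_Bc_i:i<j\}$. Also $|\alpha_{jj}|\le\norm{c_j}_2\le\norm{c_j}_1\le1$. The right-hand factor is at most $\norm{P_{W_j}g_j'}_2$, where $g_j'=\Gamma w_j$ and $W_j$ is the orthogonal complement, inside $(\Gamma B)^\perp$, of the span of the earlier projected vectors. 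Conditionally on $\Gamma|_B$ and on $g_1',\ldots,g_{j-1}'$, the subspace $W_j$ is fixed and has dimension at most $k$, while $g_j'$ is a fresh standard Gaussian. Thus $\norm{P_{W_j}g_j'}_2$ is a standard Gaussian norm in dimension at most $k$.

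\emph{Tail bound.} By \eqref{eq:gaussian-tail} with $q=k$, each bad event $\{\norm{P_{W_j}g_j'}_2>C_b\sqrt k\}$ has conditional probability at most $e^{-ck}$, uniformly in the past. Chaining these conditional bounds over the indices in $J$ gives
$$
\p\{\text{all } j\in J \text{ bad}\}\le e^{-ck|J|}\le e^{-ckh/2}\le e^{-cr^2/8},
$$
using $k\ge r$ and $h\ge r/4$. Here $C_b$ is chosen large enough that $c$ dominates after the union bound. Taking $c_1$ small then yields $\p(E_{\mathrm{big}}^c)\le e^{-cr^2}$.

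\emph{Main obstacle.} The delicate step is justifying the comparison between the quotient distances and the independent Gaussian increments. The $w_j$ are defined deterministically from the fixed lists, so their independence is legitimate. What must be checked is that $\spanop\{Q_Bw_i:i<j\}$ equals $\spanop\{Q_Bc_i:i<j\}$; this holds because the $b_i$ are killed by $Q_B$. One must also check that conditioning on the past leaves $g_j'$ standard Gaussian, which holds because $\Gamma w_j$ is independent of $\Gamma$ restricted to $B+\spanop\{w_i:i<j\}$. The independence of the orthogonal images under a Gaussian matrix is exactly what makes this conditioning work.
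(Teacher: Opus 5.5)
Your proposal is correct and is essentially the paper's proof. Both arguments run Gram--Schmidt on the $c_j$ modulo $B$ and the earlier $c_i$, use independence of the Gaussian images of orthogonal vectors to write each distance as a factor at most one times a $\chi$ variable in dimension at most $k$, chain the conditional tail bounds over a bad index set of size at least $h/2$, and take a union bound over $e^{CnsL}$ configurations (the paper conditions on $\Gamma$ restricted to $B+\spanop\{c_i:i<j\}$ instead of normalizing, which is the same computation). The one detail you leave implicit is that $\dim(\Gamma B)^\perp\le k$ relies on the almost-sure injectivity of $\Gamma$ on $B$, which the paper fixes in advance.
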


\begin{proof}
Fix such a subspace $B$, write $p = \dim B$ and $k = n-p$, and fix an ordered list $c_1,\ldots,c_h\in B_s$. Put
$$
V_j = B+\spanop\{c_1,\ldots,c_{j-1}\}, \quad w_j = P_{V_j^\perp}c_j.
$$
Since $B$ is spanned by at most $n-r$ vectors from $B_s$ and $j-1\le h-1\le r-1$, the subspace $V_j$ is spanned by at most $n-1$ vectors from $B_s$. Hence the full-probability injectivity event fixed above gives $\dim\Gamma V_j=\dim V_j$.
The distance in \eqref{eq:big-distances} equals
$$
D_j = \dist(\Gamma c_j,\Gamma V_j).
$$
If $w_j = 0$, then $D_j = 0$. Otherwise, use the orthogonal decomposition $\R^m=V_j\oplus V_j^\perp$. The restrictions of a standard Gaussian operator to these two orthogonal summands are independent Gaussian blocks. Hence, conditionally on $\Gamma|_{V_j}$, the operator $P_{(\Gamma V_j)^\perp}\Gamma|_{V_j^\perp}$ is a standard Gaussian operator from $V_j^\perp$ into the $(n-\dim V_j)$-dimensional space $(\Gamma V_j)^\perp$. Since $w_j\in V_j^\perp$,
$$
D_j=\norm{P_{(\Gamma V_j)^\perp}\Gamma w_j}_2\stackrel d = \norm{w_j}_2\chi_{n-\dim V_j}.
$$
Here $\norm{w_j}_2\le\norm{c_j}_2\le1$ and $n-\dim V_j\le n-p = k$. Thus, by \eqref{eq:gaussian-tail},
\begin{align} \label{eq:one-big}
\p\{D_j>C_b\sqrt k\mid\Gamma|_{V_j}\} \le e^{-c_2k}.
\end{align}
Let $I\subset[h]$ and expose the indices in $I$ in increasing order. For $i<j$, the event involving $D_i$ is measurable with respect to $\Gamma|_{V_j}$. Iterating \eqref{eq:one-big} gives
$$
\p\{D_j>C_b\sqrt k\text{ for every }j\in I\} \le e^{-c_2k\abs I}.
$$
If the fixed configuration is bad, some set $I$ of cardinality $\lceil h/2\rceil$ is bad. Hence
$$
\p\{\text{the fixed configuration is bad}\} \le2^h e^{-c_2k\lceil h/2\rceil} \le e^{-c_3kh}.
$$
Since $k\ge r$ and $h\ge r/4$, this is at most $e^{-c_4r^2}$.

To count configurations, choose an ordered basis of $B$ from $B_s$ and then the ordered list $c_1,\ldots,c_h$. The total number is at most
$$
n^2M^n,
$$
because $\dim B+h\le(n-r)+r = n$. By \eqref{eq:B-count}, its logarithm is at most $CnsL$. Choosing $c_1$ sufficiently small completes the union bound.
\end{proof}

Thus the first exceptional term in \eqref{eq:three-event-reduction} is controlled.

\section{The small-coordinate event}\label{sec:small}

We now control the second exceptional term in \eqref{eq:three-event-reduction}. The extreme points of $U_s$ are
\begin{align} \label{eq:U-extreme}
\ext(U_s) = \left\{ \frac1s\sum_{j\in J}\varepsilon_je_j:\abs J = s, \varepsilon_j\in\{-1,1\} \right\}.
\end{align}
Indeed, an extreme point has $\ell_1$-norm one and at most one nonzero coordinate of modulus strictly below $1/s$. If such a coordinate existed together with $j$ saturated coordinates, then
$$
1 = \frac js+\alpha, \quad 0<\alpha<\frac1s,
$$
so $s-j = s\alpha\in(0,1)$, an impossibility. The converse is immediate. Consequently,
\begin{align} \label{eq:U-count}
N := \abs{\ext(U_s)}\le\left(\frac{2em}{s}\right)^s, \quad \log N\le CsL.
\end{align}
Also, $\norm{u}_2 = s^{-1/2}$ for every $u\in\ext(U_s)$.

\begin{proposition}\label{prop:small}
For a sufficiently large universal choice of $C_s$, there are universal constants $c_1,c>0$ such that, if $nsL\le c_1r^2$, then
$$
\p(E_{\mathrm{small}}^c)\le e^{-cr^2}.
$$
\end{proposition}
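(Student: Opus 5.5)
The plan is to argue exactly as in Proposition~\ref{prop:big}, with the admissible subspace $B$ fixed first and the role of the ordered big-coordinate list played by a greedily chosen list of extreme points of $U_s$. Two preliminary reductions come first. The function $u\mapsto\dist(Q_Bu,F)$ is convex, being a seminorm composed with a linear map. Hence the supremum in \eqref{eq:small-good-property} over $U_s=\operatorname{conv}\ext(U_s)$ is attained on $\ext(U_s)$, and it suffices to control the $N$ points $Q_Bu$, $u\in\ext(U_s)$, where $\log N\le CsL$ by \eqref{eq:U-count}. Moreover, $Q_B$ annihilates $B$, so $Q_Bu=Q_B w$ with $w=P_{B^\perp}u$ and $\norm{w}_2\le s^{-1/2}$.

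Next, build $F$ greedily. Put $h=\lfloor k/16\rfloor+1$ and start from $F_0=\{0\}$. As long as some extreme point $u$ satisfies $\dist(Q_Bu,F_{j})>C_s\sqrt{k/s}$, choose such a point $u_{j+1}$ and set $F_{j+1}=F_j+\R Q_Bu_{j+1}$. If this process stops before $h$ steps, the last $F_j$ witnesses \eqref{eq:small-good-property}. Therefore $E_{\mathrm{small}}^c$ is contained in the following event: for some admissible $B$ and some ordered list $u_1,\ldots,u_h\in\ext(U_s)$, every $j\le h$ satisfies
$$
D_j:=\dist\left(Q_Bu_j,\spanop\{Q_Bu_i:i<j\}\right)>C_s\sqrt{k/s}.
$$

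For a fixed configuration, repeat the conditioning from the proof of Proposition~\ref{prop:big}. Let $V_j=B+\spanop\{u_1,\ldots,u_{j-1}\}$, so that $\spanop\{Q_Bu_i:i<j\}\subset P_{(\Gamma B)^\perp}\Gamma V_j$, and therefore
$$
D_j\le\dist(\Gamma u_j,\Gamma V_j)=\norm{P_{(\Gamma V_j)^\perp}\Gamma P_{V_j^\perp}u_j}_2 .
$$
Conditionally on $\Gamma|_{V_j}$, the right-hand side is distributed as $\norm{P_{V_j^\perp}u_j}_2\,\chi_{d}$ with $d\le k$ and $\norm{P_{V_j^\perp}u_j}_2\le s^{-1/2}$. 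This step uses only the independence of the Gaussian blocks on $V_j$ and $V_j^\perp$. No injectivity is needed here; if $\Gamma V_j$ is degenerate, the bound only improves. By \eqref{eq:gaussian-tail}, with $C_s$ large,
$$
\p\{D_j>C_s\sqrt{k/s}\mid\Gamma|_{V_j}\}\le e^{-c_2k}.
$$
The events for $i<j$ are $\Gamma|_{V_j}$-measurable, so iterating gives probability at most $e^{-c_2kh}\le e^{-ck^2/16}\le e^{-cr^2}$ for the fixed configuration, since $k\ge r$.

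Finally, take a union bound. The number of choices is at most $n^2 M^{n}$ for the ordered basis of $B$, times $N^h$ for the list. Since $h\le k\le n$, the logarithm of the total count is at most $CnsL$. The condition $nsL\le c_1r^2$ with small $c_1$ then absorbs this count into $e^{-cr^2}$.

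The main obstacle I expect is the conditioning step. Unlike in Proposition~\ref{prop:big}, the greedy points are chosen adaptively from the random cloud, so the bound must be stated over all fixed ordered lists before any randomness is revealed; the greedy process is only used afterwards as a deterministic inclusion of events. I would also verify carefully that the quotient by $\Gamma B$ is dominated by the distance to $\Gamma V_j$, so that the chi-variable has at most $k$ degrees of freedom while the multiplier stays at most $s^{-1/2}$.
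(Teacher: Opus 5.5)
Your proof is correct and is essentially the paper's argument: you reduce to $\ext(U_s)$ by convexity, greedily build $\lfloor k/16\rfloor+1$ extreme points with large successive quotient distances, bound each step by a conditional $\norm{w_j}_2\chi_{d}$ tail with $d\le k$ iterated along the increasing filtration, and then union-bound over $B$ and ordered tuples. The paper instead conditions on $\Gamma|_B$ and runs Gram--Schmidt for $G_B$ in $B^\perp$, which is the same mechanism as your $\Gamma|_{V_j}$ conditioning. One small correction: degeneracy of $\Gamma$ on $V_j$ does not improve the bound, because it raises $d=n-\dim\Gamma V_j$. What you actually need is $\operatorname{rank}(\Gamma|_B)=\dim B$, which gives $d\le k$, and this holds on the full-probability injectivity event already fixed in Section~\ref{sec:reduction}.
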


\begin{proof}
Fix such a subspace $B$ and write $k = n-\dim B$. Condition on $\Gamma|_B$. With respect to the orthogonal decomposition $\R^m=B\oplus B^\perp$, the restrictions $\Gamma|_B$ and $\Gamma|_{B^\perp}$ are independent Gaussian blocks. Therefore, after conditioning on $\Gamma|_B$ and projecting the range onto $(\Gamma B)^\perp$, the operator
$$
G_B = Q_B|_{B^\perp}:B^\perp\longrightarrow(\Gamma B)^\perp
$$
remains a standard Gaussian operator into the $k$-dimensional Euclidean space $(\Gamma B)^\perp$, and
$$
Q_Bu = G_B(P_{B^\perp}u).
$$
Put
$$
q_0 = \left\lfloor\frac{k}{16}\right\rfloor, \quad R = s^{-1/2}, \quad a = C_s\sqrt{\frac{k}{s}}.
$$
Since $k\ge r\ge32L$, one has $k/16\le q_0+1\le k/8$ and $q_0<k/2$.

Suppose that the small-coordinate property \eqref{eq:small-good-property} fails. Since distance to a fixed subspace is convex, its maximum over $U_s$ is attained at an extreme point. We construct $q_0+1$ vectors greedily. Suppose that $v_1,\ldots,v_j$ have already been chosen, where $0\le j\le q_0$, and put
$$
F_j=\spanop\{G_Bv_i:i\le j\},
$$
with $F_0=\{0\}$. Then $\dim F_j\le j\le q_0$. Since \eqref{eq:small-good-property} fails for $B$, there is $u\in\ext(U_s)$ such that $\dist(Q_Bu,F_j)>a$. Setting $v_{j+1}=P_{B^\perp}u$ and using $Q_Bu=G_Bv_{j+1}$ gives the next vector. Thus we obtain
$$
v_1,\ldots,v_{q_0+1}\in\{P_{B^\perp}u:u\in\ext(U_s)\}
$$
such that
$$
\dist\left(G_Bv_j,\spanop\{G_Bv_i:i<j\}\right)>a \quad(1\le j\le q_0+1).
$$
In particular, the selected images are linearly independent at every stage.
Fix one ordered tuple and perform Gram-Schmidt in $B^\perp$:
$$
v_j = p_j+w_j, \quad p_j\in\spanop\{v_i:i<j\}, \quad w_j\perp\spanop\{v_i:i<j\}.
$$
The $w_j$ are mutually orthogonal and satisfy $\norm{w_j}_2\le R$. On the event that the preceding $j-1$ greedy inequalities hold, the preceding images are linearly independent, so their span has dimension $j-1$. Conditionally on the preceding images, the $j$-th distance therefore has distribution
$$
\norm{w_j}_2\chi_{k-j+1}.
$$
Since $\norm{w_j}_2\le s^{-1/2}$, the event that the $j$-th distance exceeds $a$ implies $\chi_{k-j+1}>C_s\sqrt k$. Hence, by \eqref{eq:gaussian-tail},
$$
\p\left\{ \dist(G_Bv_j,\spanop\{G_Bv_i:i<j\})>a \middle| G_Bv_1,\ldots,G_Bv_{j-1} \right\} \le e^{-c_2k}.
$$
Multiplying over $q_0+1$ steps and summing over the at most $N^{q_0+1}$ ordered tuples gives, conditionally on $\Gamma|_B$,
$$
\p\{\text{failure for this }B\mid\Gamma|_B\}\le\exp(CsL(q_0+1)-c_2k(q_0+1)).
$$
Since $k/16\le q_0+1\le k/8$, the last quantity is at most $\exp(CskL-c_3k^2)$.

Recall that $M = \abs{B_s}$. The family of such subspaces has cardinality at most $nM^n$, hence logarithm at most $CnsL$. Since $skL\le nsL$ and $nsL\le c_1r^2\le c_1k^2$, choosing $c_1$ sufficiently small completes the union bound.
\end{proof}

Thus the second exceptional term in \eqref{eq:three-event-reduction} is controlled. It remains only to prove that on the two good events, every witness body has Gaussian measure at most $1/2$.

\section{Deterministic closure: combining the two events}\label{sec:closure}

\begin{proposition}[Deterministic closure]\label{prop:closure}
There is a universal constant $c>0$ such that, if
\begin{align} \label{eq:closure-condition}
\rho\left(\frac{\sqrt r}{n}+\frac{\sqrt L}{\sqrt{rs}}\right)\le c,
\end{align}
then \eqref{eq:uniform-half} holds on $G$.
\end{proposition}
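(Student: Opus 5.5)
Fix $A\in A_\eps$ with columns $a_1,\dots,a_n$, and write $b_i=a_i^{\mathrm{big}}\in B_s$, $u_i=a_i^{\mathrm{small}}\in U_s$. By \eqref{eq:common-enlargement},
$$
K_A\subset 2\rho\,\Gamma A^{\mathrm{big}}(B_1^n)+2\rho\,\Gamma A^{\mathrm{small}}(B_1^n).
$$
The plan has three stages: suppress, quotient, and bound the Gaussian measure.

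\emph{Suppression.} I would order the big parts greedily. Run Gram--Schmidt on $\Gamma b_1,\dots,\Gamma b_n$ and keep the $r$ indices whose successive distances are largest; call this set $S$. Let $B$ be the span of the remaining $b_i$, $i\notin S$. Then $\dim B\le n-r$ and $B$ is spanned by vectors of $B_s$, so $B$ is admissible and $k=n-\dim B\ge r$. Since $\Gamma B\subset\R^n$ is a fixed subspace, the product structure of $\gamma_n$ gives $\gamma_n(K_A)\le\gamma_k(Q_B$-image of $K_A)$. This means we may pass to $(\Gamma B)^\perp$: the big generators outside $S$ vanish after projection, and we are left with
$$
P\,K_A\subset 2\rho\,\absconv\{Q_Bb_i:i\in S\}+2\rho\,Q_BA^{\mathrm{small}}(B_1^n).
$$

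\emph{Small cloud.} On $E_{\mathrm{small}}$ there is a subspace $F$ with $\dim F\le k/16$ such that every $Q_Bu_i$ lies within $C_s\sqrt{k/s}$ of $F$. Split each $Q_Bu_i=f_i+e_i$ with $f_i\in F$ and $\|e_i\|\le C_s\sqrt{k/s}$. The error hull $2\rho\,\absconv\{e_i\}$ has $n$ generators of length $\rho\sqrt{k/s}$. Here I would use the local Maurey argument: approximate each point of this hull by an average of $t$ generators. The approximation error is then $\rho\sqrt{k/(st)}$, and the entropy cost is $n^t$. Choosing $t$ so that $\rho\sqrt{k}/\sqrt{st}\lesssim c\sqrt k$ requires $t\approx\rho^2/s$, and the entropy $t\log n\lesssim k$ is exactly the condition $\rho\sqrt{L}/\sqrt{rs}\le c$ from \eqref{eq:closure-condition}. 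So after this step the small part is a union of $e^{ck}$ translates of a ball of radius $c\sqrt k$, together with a piece inside $F$. The $F$-component is harmless: quotient further by $F$, which costs only $k/16$ dimensions.

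\emph{Big part and volume bound.} In $(\Gamma B+F)^\perp$, of dimension $k'\ge 15k/16$, we are left with $2\rho\,\absconv\{Q b_i:i\in S\}$, with $r\approx$ comparable generators. By $E_{\mathrm{big}}$, applied in this order (I expect to need to absorb $F$, possibly via a slightly modified version of the event), at least half of the Gram--Schmidt widths are at most $C_b\sqrt k$. The remaining widths are at most $\|\Gamma b\|\lesssim\sqrt n$ anyway. After scaling, the projected body is contained in a cross-polytope with $d\le r$ generators, and a positive proportion of its terminal Gram--Schmidt widths are $\lesssim\rho\sqrt k$. Its Gaussian measure is estimated through a volume/Gram--Schmidt product bound: the measure of $\absconv$ of $d$ vectors in $\R^k$ is at most $\prod_j(C\,\text{width}_j/\sqrt k)\cdot(\cdots)/d!$-type terms. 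This is where the term $\rho\sqrt r/n$ enters: widths $\rho\sqrt k$ against a normalization $\sqrt{k}\cdot n/\sqrt r$ from the $d!$ and $\ell_1$ scaling. The result is $e^{-ck}$ for each translate, which beats the $e^{ck}$ Maurey entropy, and the final bound is at most $1/2$.

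The main obstacle is making the suppression order compatible with $E_{\mathrm{big}}$ after the extra quotient by $F$ and the Maurey translates. Concretely, one must make sure that the controlled widths survive the additional $k/16$-dimensional quotient and that the entropy bookkeeping closes with the correct constants. My first attempt would be to choose $S$ after projecting by $F$, and to note that projection only decreases Gram--Schmidt distances, so the small widths guaranteed by $E_{\mathrm{big}}$ remain small.
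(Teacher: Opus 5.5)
\textbf{There is a genuine gap at your suppression step, and it costs the whole exponent.}

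Fixing one set $S$ of $r$ indices and quotienting by $\Gamma B$, with $B=\spanop\{b_i:i\notin S\}$, does remove the unselected big generators. But the selected ones still carry the full $\ell_1$-mass $2\rho$. On $G$ you only know that half of their quotient widths are at most $C_b\sqrt k$. So the Gram--Schmidt volume bound $(C\rho R/d)^{v}$, with $R=C_b\sqrt k$ and $d\asymp k$, has base $\asymp\rho/\sqrt k$. Since $k=r$ whenever the unselected $b_i$ are linearly independent, this forces $\rho\lesssim\sqrt r\asymp n^{3/8}L^{1/4}$, far below $n^{5/8}$.

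The ``normalization $\sqrt k\cdot n/\sqrt r$'' you invoke has no source. The factor $d!$ contributes only a factor of order $d\lesssim k$ per generator, and nothing in your construction produces a factor of $n$. No choice of a single $S$, greedy or otherwise, repairs this. What is needed is a reduction of the $\ell_1$-mass on the surviving big generators, not a better choice of which ones survive.

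\textbf{The missing idea is the suppression average.} Suppose $x=\sum_i a_iy_i+q$ with $\sum_i\abs{a_i}\le t$ and $q\in tQ$. For a uniformly random $r$-subset $J$ one has $\E_J\sum_{j\in J}\abs{a_j}=\frac rn\sum_i\abs{a_i}$, so at least half of the sets $J$ carry mass at most $2rt/n$. Rescaling $y_j$, $j\in J$, by $r/n$ then gives $\gamma_n(tP)\le\frac{2}{\binom nr}\sum_{\abs J=r}\gamma_n(4tP_J)$.

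Because the favourable $J$ depends on the point $x$, you must prove $\gamma_n(8\rho P_{A,J})\le e^{-cr}$ for \emph{every} $r$-subset $J$. This is exactly why $E_{\mathrm{big}}$ and $E_{\mathrm{small}}$ are required uniformly over all admissible $B$. After rescaling, the good big widths are $C_b\frac rn\sqrt k$, and the base becomes $\rho\frac{r}{n\sqrt k}\le\rho\frac{\sqrt r}{n}$. That is the first term of \eqref{eq:closure-condition}.

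With this in place your remaining steps can be completed, but the final estimate also needs to be made precise. The Gram--Schmidt bound must be applied to the Minkowski sum of the big hull and the Maurey error ball, not to the big hull alone. As in the paper, proceed as follows:
\begin{itemize}
\item absorb the ball of radius $\delta$ as $\absconv\{\delta\sqrt d\,\theta_j:j\le d\}$ for an orthonormal basis $\theta_1,\dots,\theta_d$, using $B_2^d\subset\sqrt dB_1^d$;
\item put everything into one absolute convex hull;
\item reduce, by the Carath\'eodory-type argument, to $d$ linearly independent generators;
\item order the bad big generators first, so that at least $d/4$ terminal widths are controlled.
\end{itemize}

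Two parts of your plan are fine as they stand. Your finite-center version of Maurey, with Anderson's inequality to discard the translates, is a legitimate alternative to the paper's local Maurey with $t=\lceil d/\ell\rceil$, and it yields the same second term. The extra quotient by $F$ needs no modified event, since projection only decreases the distances controlled by $E_{\mathrm{big}}$.
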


\begin{proof}
Assume $G$ holds and fix $A\in A_\eps$. Apply \eqref{eq:big-small-partition} to every column $a_i = Ae_i$ and write $a_i = b_i+u_i$, where $b_i = a_i^{\mathrm{big}}\in B_s$ and $u_i = a_i^{\mathrm{small}}\in U_s$. Put
$$
y_i = \Gamma b_i, \quad z_i = \Gamma u_i \quad(i\le n)
$$
and
$$
P_A = \absconv\{y_i:i\le n\}+\absconv\{z_i:i\le n\}.
$$
By \eqref{eq:common-enlargement},
\begin{align} \label{eq:body-enlargement}
K_A\subset2\rho P_A.
\end{align}
The big-coordinate vectors need not be small. We first apply the suppression average. If $Q\subset\R^n$ is convex and contains the origin, and
$$
P = \absconv\{y_1,\ldots,y_n\}+Q
$$
and $J$ is an $r$-subset of $[n]$, set
$$
P_J = \absconv\left(\{y_i:i\notin J\}\cup\left\{\frac rn y_j:j\in J\right\}\right)+Q.
$$
For every $t>0$,
\begin{align} \label{eq:suppression-average}
\gamma_n(tP) \le \frac2{\binom nr}\sum_{\abs J = r}\gamma_n(4tP_J).
\end{align}
Indeed, if $x\in tP$, choose a representation $x = \sum a_iy_i+q$ with $\sum_i\abs{a_i}\le t$ and $q\in tQ$. Then
$$
\E_J\sum_{j\in J}\abs{a_j} = \frac rn\sum_i\abs{a_i}\le\frac{rt}{n}.
$$
For at least half of the sets $J$, the selected mass is at most $2rt/n$; after scaling the selected generators by $r/n$, division by $4t$ leaves total $\ell_1$-mass at most $3/4$, while $q/(4t)\in Q$. This proves \eqref{eq:suppression-average} pointwise and hence after integration.

Apply \eqref{eq:suppression-average} to $P_A$ with $t = 2\rho$ and $Q = \absconv\{z_i:i\le n\}$, and denote the corresponding body by $P_{A,J}$. This is an average over all $r$-subsets $J$; no favorable set is chosen. It is enough to prove, uniformly for $\abs J = r$,
\begin{align} \label{eq:fixed-J}
\gamma_n(8\rho P_{A,J})\le e^{-cr}.
\end{align}
Fix $J$. The selected big-coordinate vectors have gained the factor $r/n$, but all unselected big-coordinate vectors are still present. We remove them by quotienting their span. Let
$$
B = \spanop\{b_i:i\notin J\}, \quad k = n-\dim B\ge r.
$$
Then $B$ is admissible. This is exactly the quotient for which $E_{\mathrm{small}}$ was designed. There is a subspace $F\subset(\Gamma B)^\perp$ satisfying \eqref{eq:small-good-property}. The exceptional object is the subspace $F$, not a collection of exceptional vectors: the whole small-coordinate cloud is controlled modulo $F$. Since $\dim F\le\lfloor k/16\rfloor$, the space $(\Gamma B)^\perp\cap F^\perp$ has dimension at least $k-\lfloor k/16\rfloor\ge\lceil3k/4\rceil$. Choose a subspace
$$
H_0\subset(\Gamma B)^\perp\cap F^\perp, \quad d := \dim H_0 = \left\lceil\frac{3k}{4}\right\rceil.
$$
Projection onto $H_0$ kills every unselected big-coordinate vector. Define
$$
\widehat y_j = \frac rnP_{H_0}\Gamma b_j \quad(j\in J), \quad \widehat z_i = P_{H_0}\Gamma u_i \quad(i\le n).
$$
The small-coordinate event gives
\begin{align} \label{eq:projected-small-radius}
\norm{\widehat z_i}_2 \le R_s := C_s\sqrt{\frac{k}{s}} \quad(i\le n).
\end{align}
For a compact set $Q$ and an orthogonal projection $P_E$, one has $\gamma_n(Q)\le\gamma_E(P_EQ)$. Hence
$$
\gamma_n(8\rho P_{A,J})\le\gamma_{H_0}\left(8\rho\left[\absconv\{\widehat y_j:j\in J\}+\absconv\{\widehat z_i:i\le n\}\right]\right).
$$
Thus the body now consists of $r$ scaled big-coordinate vectors and $n$ uniformly small small-coordinate vectors in a $d$-dimensional space. The vectors are small, but there are still too many of them. We use Maurey's empirical method \cite{Pisier1981} only to reduce the covering entropy to $e^{Cd}$.

Put
$$
\ell = \log\frac{en}{d}, \quad t = \left\lceil\frac d\ell\right\rceil.
$$
Since $d\ge3k/4$ and $k\ge r\ge32L$, one has
$$
1\le\ell\le2L\le d, \quad r\le k\le\frac{4d}{3}.
$$
If
$$
z = \sum_{i = 1}^n\alpha_i\widehat z_i, \quad \sum_i\abs{\alpha_i}\le1,
$$
let $X$ equal $\operatorname{sgn}(\alpha_i)\widehat z_i$ with probability $\abs{\alpha_i}$ and $0$ with the remaining probability. For independent copies $X_1,\ldots,X_t$,
$$
\E\left\|\frac1t\sum_{j = 1}^tX_j-z\right\|_2^2 = \frac1t\left(\E\norm{X}_2^2-\norm{z}_2^2\right) \le\frac{R_s^2}{t}.
$$
Thus some empirical average lies within $R_s/\sqrt t$ of $z$. Its support has cardinality at most $t$, and since $t\le d\le n$, we enlarge it to a $t$-element subset of $[n]$ if necessary. Hence
\begin{align} \label{eq:local-maurey}
\absconv\{\widehat z_i:i\le n\} \subset \bigcup_{\abs S = t}\left(\absconv\{\widehat z_i:i\in S\}+\frac{R_s}{\sqrt t}B_2^{H_0}\right).
\end{align}
Fix $S$ and an orthonormal basis $e_1,\ldots,e_d$ of $H_0$. Since $B_2^{H_0}\subset\sqrt d B_1^d$, the corresponding body in \eqref{eq:local-maurey}, after adding the big-coordinate hull, is contained in
$$
3\absconv(W_S),
$$
where
$$
W_S = \{\widehat y_j:j\in J\}\cup\{\widehat z_i:i\in S\}\cup\left\{R_s\sqrt{\frac dt}e_j:j\le d\right\}.
$$
The last $d$ vectors span $H_0$. Every point of $\absconv(W_S)$ belongs to the absolute convex hull of at most $d$ linearly independent members of $W_S$. Indeed, minimize the $\ell_1$-norm in the appropriate affine fibre and choose an extreme point of the face of minimizers in positive and negative coordinates. If its active signed columns were linearly dependent, a sufficiently small perturbation along a dependence would either lower the objective or write the point as a nontrivial midpoint of the minimizing face. Thus at most $d$ active signed columns remain, and they are linearly independent. If fewer than $d$ remain, extend them to a $d$-element independent subfamily of $W_S$, which is possible because $W_S$ spans $H_0$. Hence every point of $\absconv(W_S)$ belongs to $\absconv(I)$ for some $d$-element independent subfamily $I\subset W_S$, and it is enough to sum over these subfamilies.

Fix one such subfamily and let $h$ be its number of big-coordinate vectors. Every other selected vector has norm at most
$$
R_s\sqrt\ell,
$$
because $d/t\le\ell$. If $h<r/4$, put the big-coordinate vectors first and all other vectors last. At least $d-h\ge d-r/4\ge2d/3$ terminal Gram-Schmidt distances are then at most $R_s\sqrt\ell$.

Assume $h\ge r/4$. This is exactly the point for which $E_{\mathrm{big}}$ was designed. Order the selected big-coordinate parts arbitrarily. The event $E_{\mathrm{big}}$ gives at least $\lceil h/2\rceil$ successive quotient distances at most $C_b\sqrt k$. Projection onto $H_0$ can only decrease distance, and the suppression factor $r/n$ gives
$$
R_b = C_b\frac rn\sqrt k.
$$
Mark these big-coordinate vectors as good. Reorder the selected generators by putting all bad big-coordinate vectors first, then all other vectors, then the good big-coordinate vectors while preserving the relative order of the good big-coordinate vectors. Fix a good big-coordinate vector. Every big-coordinate vector that preceded it in the original ordering still precedes it after this reordering: the bad ones have been moved to the first block, and the earlier good ones retain their relative order in the last block. The new predecessor span is therefore larger than the predecessor span used in the definition of its good quotient distance. Distance to a larger subspace can only decrease, so the new Gram-Schmidt distance is at most $R_b$. Every other distance is at most the norm of its generator. Therefore a terminal block of at least
$$
(d-h)+\left\lceil\frac h2\right\rceil\ge d-\frac h2\ge d-\frac r2\ge\frac d3
$$
generators has successive distances at most
$$
R_b+R_s\sqrt\ell.
$$
In both cases at least $d/4$ terminal distances satisfy this bound. This is the only point where the two good events meet: $E_{\mathrm{small}}$ controls the norm-bounded generators, while $E_{\mathrm{big}}$ controls the remaining successive directions.

Consider a cross-polytope $P$ with $d$ generators and $v\ge d/4$ terminal Gram-Schmidt distances bounded by $R = R_b+R_s\sqrt\ell$. Let $E$ be the span of the last $v$ Gram-Schmidt directions and project onto $E$. The first $d-v$ generators vanish. The matrix of the last $v$ projected generators is triangular in the Gram-Schmidt basis, and its diagonal entries are the corresponding Gram-Schmidt distances. Its determinant is therefore at most $R^v$, so the projected volume is at most
$$
R^v\operatorname{vol}_v(B_1^v) = R^v\frac{2^v}{v!}.
$$
The projection inequality for Gaussian measure and the pointwise bound on the Gaussian density give, at scale $24\rho$,
$$
\gamma_{H_0}(24\rho P) \le \gamma_E(24\rho P_E P) \le \left(\frac{C\rho R}{v}\right)^v \le \left(\frac{C'\rho R}{d}\right)^v,
$$
where we used $v\ge d/4$ in the last step. Moreover,
$$
\frac{R}{d}\le C\left(\frac{r}{n\sqrt k}+\frac{\sqrt L}{\sqrt{ks}}\right)\le C\left(\frac{\sqrt r}{n}+\frac{\sqrt L}{\sqrt{rs}}\right).
$$
Thus \eqref{eq:closure-condition}, with a sufficiently small universal constant, ensures that $C'\rho R/d\le c_0<1$. Since $v\ge d/4$, it follows that
$$
\gamma_{H_0}(24\rho P) \le \left(\frac{C'\rho R}{d}\right)^{d/4}.
$$

It remains to count the covers. Since $t\le d$ and $r\le4d/3$, the family $W_S$ has fewer than $4d$ members, so the number of $d$-subfamilies is at most $(4e)^d$. Moreover,
$$
\log\binom nt \le t\log\frac{en}{t} \le4d,
$$
because $d/t\le\ell$, $t\le2d/\ell$, and $\log(en/t)\le\ell+\log\ell\le2\ell$. Thus the total covering cost is $e^{C_1d}$. Choosing the universal constant in \eqref{eq:closure-condition} small enough makes the preceding single-cross-polytope estimate at most $e^{-2C_1d}$. Summing over all covers therefore gives
$$
\gamma_n(8\rho P_{A,J})\le e^{-cd}\le e^{-cr}.
$$
This proves \eqref{eq:fixed-J}. Finally, \eqref{eq:body-enlargement} and \eqref{eq:suppression-average} yield
$$
\gamma_n(K_A)\le2e^{-cr}\le\frac12
$$
for all sufficiently large $n$.
\end{proof}

\section{Parameter choice and proof of the main theorem}\label{sec:parameters}

The preceding argument applies provided
$$
nsL\lesssim r^2
$$
and
$$
\rho\left(\frac{\sqrt r}{n}+\frac{\sqrt L}{\sqrt{rs}}\right)\lesssim1.
$$
The first relation is the entropy condition for the two good events. The two terms in the second relation are the big-coordinate and small-coordinate costs in the final quotient. We balance the two costs and saturate the entropy condition:
$$
\frac{\sqrt r}{n}\asymp\frac{\sqrt L}{\sqrt{rs}}, \quad r^2\asymp nsL.
$$
This gives
$$
s\asymp n^{1/2}, \quad r\asymp n^{3/4}L^{1/2}, \quad \rho\asymp n^{5/8}L^{-1/4}.
$$
Choose a sufficiently large universal constant $C_0$ and then a sufficiently small universal constant $c_0$, and set
\begin{align} \label{eq:parameters}
s = \left\lfloor n^{1/2}\right\rfloor, \quad r = \left\lceil C_0n^{3/4}L^{1/2}\right\rceil, \quad \rho = c_0n^{5/8}L^{-1/4}.
\end{align}
For all sufficiently large $n$,
$$
1\le\rho\le n, \quad 32L\le r\le\frac n2.
$$
Also,
$$
nsL\le Cn^{3/2}L\le c_1r^2
$$
when $C_0$ is sufficiently large. Finally,
$$
\frac{\sqrt r}{n}\le C\sqrt{C_0}n^{-5/8}L^{1/4}, \quad \frac{\sqrt L}{\sqrt{rs}}\le\frac{C}{\sqrt{C_0}}n^{-5/8}L^{1/4},
$$
so \eqref{eq:closure-condition} holds when $c_0$ is sufficiently small.

We now conclude the proof. Choose $s,r,\rho$ as in \eqref{eq:parameters}. Propositions~\ref{prop:big} and \ref{prop:small} give
$$
\p(E_{\mathrm{big}}^c)\le e^{-cr^2}, \quad \p(E_{\mathrm{small}}^c)\le e^{-cr^2}.
$$
Proposition~\ref{prop:closure} gives \eqref{eq:uniform-half} on $G$. Therefore \eqref{eq:three-event-reduction}, \eqref{eq:net-size}, and $N(A)\ge n^3-n^2$ give
$$
\p(E_\rho) \le 2e^{-cr^2}+\abs{A_\eps}2^{-(n^3-n^2)} \le 2e^{-cr^2}+e^{-c'n^3} \le\frac{C}{n}.
$$
Substituting the value of $\rho$ proves the probabilistic assertion. Since $G_m$ is full-dimensional almost surely and this event has positive probability, some realization gives the deterministic conclusion.

\section*{Acknowledgements}
The author thanks Antonios Hmadi for communicating his independent manuscript before public posting and for a collegial exchange about the relation between the two arguments.

\section*{Declaration of generative AI and AI-assisted technologies in the manuscript preparation process}
During the preparation of this manuscript, the author used OpenAI's ChatGPT for language, exposition, organization, and critical review. The mathematical argument, including the two-event proof architecture and the bound $n^{5/8}(\log n)^{-3/8}$, was obtained independently by the author before this use. During a later AI-assisted review of that existing argument, a further optimization of the small-coordinate estimate was suggested, improving the logarithmic factor from $(\log n)^{-3/8}$ to $(\log n)^{-1/4}$. The author independently verified and proved this refinement, reviewed all AI-assisted material, and takes full responsibility for the paper.

\end{document}